\def\BibTeX{{\rm B\kern-.05em{\sc i\kern-.025em b}\kern-.08em
    T\kern-.1667em\lower.7ex\hbox{E}\kern-.125emX}}
 \newcounter{thm}
  \newcounter{re}
 \newtheorem{Proposition}[thm]{Proposition}
 \newtheorem{theorem}{Theorem}
\newtheorem{lemma}{Lemma}
\newcommand{\cS}{\mathcal{S}}
\newcommand{\cT}{\mathcal{T}_{\rm C}}
\title{First-order optimal sequential subspace change-point detection}
\name{Liyan Xie\sthanks{Work supported by the US National Science Foundation under Grant CCF\,1650913, CMMI\,1538746, and CCF\,1442635.} \hskip2cm George V. Moustakides\sthanks{Work supported by the US National Science Foundation under Grant CIF\,1513373, through Rutgers University.} \hskip2cm Yao Xie$^*$}
\address{$^*$Georgia Institute of Technology, School of Industrial and Systems Engineering,	Atlanta, GA, USA.\\
$^\dagger$Rutgers University, Department of Computer Science, New Brunswick, NJ, USA.}
\begin{document}

%
\maketitle
\begin{abstract}
We consider the sequential change-point detection problem of detecting changes that are characterized by a subspace structure. Such changes are frequent in high-dimensional streaming data altering the form of the corresponding covariance matrix. In this work we present a Subspace-CUSUM procedure and demonstrate its first-order asymptotic optimality properties for the case where the subspace structure is unknown and needs to be simultaneously estimated. To achieve this goal we develop a suitable analytical methodology that includes a proper parameter optimization for the proposed detection scheme. Numerical simulations corroborate our theoretical findings.
\end{abstract}

\vspace{-0.2in}
\section{Introduction}
\vspace{-0.15in}

Detecting changes in the distribution of high-dimensional streaming data is a fundamental problem in various applications such as swarm behavior monitoring \cite{berger2016classifying}, sensor networks, and seismic event detection. In various scenarios, the change can be represented as a linear subspace which is captured through a change in the covariance structure.

Given a sequence of samples $x_1, x_2, \ldots, x_t$, $t = 1,2,\ldots$, where $x_t \in \mathbb{R}^k$ and $k$ is the signal dimension, there may be a change-point time $\tau$ where the distribution of the data stream changes. Our goal is to detect this change as quickly as possible using on-line techniques.
We are particularly interested in the structured change that occurs in the signal covariance. We study two related settings, the {\it emerging subspace}: meaning that the change is a subspace emerging from a noisy background, and the {\it switching subspace}: meaning that the change is a switch in the direction of the subspace. The emerging subspace problem can arise from coherent weak signal detection from seismic sensor arrays, and the switching subspace detection can be used for principal component analysis for streaming data. In these settings, the change can be shown to be equivalent to a low-rank component added to the original covariance matrix.  

Classical approaches to covariance change detection usually consider generic settings without assuming any structure. 
The CUSUM statistics can be derived if the pre-change and post-change distributions are known. 
For the multivariate case, the Hotelling $T^2$ control chart is the traditional way to detect the covariance changes. The determinant of the sample covariance matrix was also used in \cite{alt2004multivariate} to detect change of the determinant of the covariance matrix. A multivariate CUSUM based on likelihood functions of multi-variate Gaussian is studied in \cite{healy1987note}  but it only considers the covariance change from $\Sigma$ to $c\Sigma$ for a constant $c$. 
Offline change detection of covariance change from $\Sigma_1$ to $\Sigma_2$ is studied in \cite{chen2004statistical} using the Schwarz information criterion \cite{schwarz1978estimating}, where the change-point location must satisfy certain regularity condition to ensure the existence of the maximum likelihood estimator. Recently, \cite{arias2012detection} studies the hypothesis test to detect a shift in the off-diagonal sub-matrix planted in the covariance matrix using the likelihood ratios.

In this paper, we propose the Subspace-CUSUM procedure by combing the CUSUM statistic with subspace estimation and proper parameter optimization. We prove that the resulting detector is first-order asymptotically optimal in the sense that the ratio of its expected detection delay with the corresponding of the optimum CUSUM (that has complete knowledge of the pre- and post-change statistics) tends to 1 as the average run length tends to infinity.

The rest of this paper is organized as follows. Section \ref{sec:sub} details on the two problems of emerging and switching subspace. Section \ref{sec:CUSUM} presents the Subspace-CUSUM procedure. Section \ref{sec:opt} considers the asymptotic analysis of the proposed scheme along with parameter optimization and proof of first-order asymptotic optimality. Finally, in Section \ref{sec:num} we present simulation results that corroborate our theoretical findings.

\vspace{-0.2in}
\section{Subspace Change-point detection}\label{sec:sub}
\vspace{-0.18in}
Both settings, emerging and switching subspace, can be shown to be related to the so-called {\it spiked covariance matrix} \cite{johnstone2001distribution}. For simplicity, we consider the rank-one spiked covariance matrix problem, which is given by 
\[
\Sigma = \sigma^2I_k + \theta uu^\intercal,
\]
where $I_k$ denotes an identity matrix of size $k$;  $\theta$ the signal strength; $u \in \mathbb{R}^{k\times 1}$ represents a basis for the subspace $\|u\|=1$ and $\sigma^2$ the noise power. We can define the Signal-to-Noise Ratio (SNR) as $\rho = \theta/\sigma^2$. 

In the {\it emerging subspace} problem the sequentially observed data are as follows
\begin{equation}
\begin{array}{ll}
x_t  \stackrel{\text{iid}}{\sim} \mathcal{N}(0,\sigma^2I_k ), &t = 1,2,\ldots,\tau, \\
x_t \stackrel{\text{iid}}{\sim} \mathcal{N}(0,  \sigma^2I_k + \theta u u^\intercal),& t = \tau+1,\tau+2, \ldots
\end{array}
\label{eq:hypothesis1}
\end{equation}
where $\tau$ is the unknown change-point that we would like to detect as soon as possible. We assume that the subspace $u$ is unknown since it represents anomaly or new information. 

In the {\it switching subspace} problem the data satisfy
\begin{equation}
\begin{array}{ll}
x_t  \stackrel{\text{iid}}{\sim} \mathcal{N}(0,\sigma^2I_k + \theta u_1u_1^\intercal), &t = 1,2,\ldots,\tau, \\
x_t \stackrel{\text{iid}}{\sim} \mathcal{N}(0,  \sigma^2I_k + \theta u_2 u_2^\intercal),& t = \tau+1,\tau+2,\ldots
\end{array}
\label{eq:hypothesis2}
\end{equation}
where $u_1$ and $u_2$ are the pre- and post-change subspaces. We assume that $u_1$ is completely known since it describes the statistical behavior under nominal conditions while $u_2$ is considered unknown since, as before, it expresses an anomaly.

The switching subspace problem \eqref{eq:hypothesis2} can be easily reduced into the emerging subspace problem \eqref{eq:hypothesis1}. Indeed if we select any orthonormal matrix $Q \in \mathbb{R}^{(k-1)\times k}$ that satisfies
\[ Q u_1 = 0, \quad QQ^\intercal = I_{k-1},\]
and project the observed data onto the space that is orthogonal to $u_1$ namely
$ y_t = Qx_t \in \mathbb{R}^{k-1}$,
then $y_t$ is a zero-mean random vector with covariance matrix $\sigma^2I_{k-1}$ before the change and $\sigma^2I_{k-1} + \tilde{\theta} uu^\intercal$ after the change where $u = Qu_2/\|Qu_2\|$, 
and 
\[
\tilde{\theta} = \theta \left\Vert Qu_2 \right\Vert^2 = \theta [ 1- (u_1^\intercal u_2)^2].
\]
The data in \eqref{eq:hypothesis2} under this transformation becomes
\begin{equation}
\begin{array}{ll}
y_t  \stackrel{\text{iid}}{\sim} \mathcal{N}(0, \sigma^2I_{k-1}), &t = 1,2,\ldots,\tau, \\
y_t \stackrel{\text{iid}}{\sim} \mathcal{N}(0,  \sigma^2I_{k-1}  +\tilde{ \theta} u u^\intercal), & t = \tau+1,\tau+2,\ldots
\end{array}
\label{eq:hypothesis3}
\end{equation}
which is the emerging subspace problem in \eqref{eq:hypothesis1}. We need however to emphasize that by projecting the observations onto a lower dimensional space we lose information, suggesting that the two versions of the problem \textit{are not equivalent}. In particular the optimum detector for the transformed data in \eqref{eq:hypothesis3} and the one of the original data in \eqref{eq:hypothesis2} \textit{do not coincide}. This can be easily verified by computing the corresponding CUSUM tests and their (optimum) performance. Despite this difference, it is clear that with this result we are going to present next, and by adopting the transformed version \eqref{eq:hypothesis3}, we offer a computationally simple method to solve the original problem \eqref{eq:hypothesis2}. Therefore, from now on, our analysis will focus solely on detecting $\tau$ with the ccorresponding observations following the model depicted in \eqref{eq:hypothesis1}.

\vspace{-0.15in}
\section{Subspace CUSUM}\label{sec:CUSUM}
\vspace{-0.15in}

The CUSUM test \cite{page1954continuous, siegmund2013sequential}, when the observations are i.i.d. before and after the change, is known to be exactly optimum \cite{moustakides1986optimal} in the sense that it solves a very well defined constrained optimization problem introduced in \cite{lorden1971procedures}. If $f_\infty(x),f_0(x)$ denote the pre- and post-change probability density function (pdf) of the observations respectively then the CUSUM statistic $S_t$ and the corresponding CUSUM stopping time $T_{\text{C}}$ are defined \cite{moustakides1986optimal} as follows
\begin{equation}\label{exacTUSUM}
S_t = (S_{t-1})^+ +  \log\frac{f_0(x_t)}{f_\infty(x_t)},~~T_{\text{C}}=\inf\{t>0: S_t\geq b\},
\end{equation}
where $(x)^+ = \max\{x, 0\}$ and $b$ denotes a constant threshold. We must of course point out that application of CUSUM is only possible if we have exact knowledge of the pre- and post-change pdfs.

For the data model depicted in \eqref{eq:hypothesis1} the log-likelihood ratio takes the special form
\[
\log\frac{f_0(x_t)}{f_\infty(x_t)} = \frac1{2\sigma^2}\frac{\rho}{1+\rho} \Big\{(u^\intercal x_t)^2- \sigma^2 \!\left(\!1+\frac{1}{\rho}\right)\!\log(1+\rho)\Big\}.
\]
The multiplicative factor $\rho / [2\sigma^2(1+\rho) ]> 0$ can be omitted since it only performs a constant scaling of the test statistic. We can therefore define the CUSUM test statistic using the following recursion
\begin{equation}
S_t = (S_{t-1})^+ + (u^\intercal x_t)^2 - \sigma^2\left(1+\frac{1}{\rho}\right)\log(1+\rho).
\label{cusum_recur1}
\end{equation}
Using a simple argument based on Jensen's inequality, we can claim that the increment in \eqref{cusum_recur1} has a negative average under the nominal measure and a positive average under the alternative. Due to this property, the CUSUM statistic $S_t$ oscillates near $0$ before the change, and increases with a linear trend after the change.

Since in our case we assume that the vector $u$ is unknown we propose the following alternative to \eqref{cusum_recur1} with $u$ replaced by any estimate $\hat{u}_t$
\begin{equation}
\cS_t = (\cS_{t-1})^{+} + (\hat{u}_t^\intercal x_t)^2 - d.
\label{sscusum_update}
\end{equation}
Quantity $d$ is a constant that we would like to select properly so that the increment of $\cS_t$ mimic the main property of the increment of the CUSUM statistic $S_t$, that is, have a negative mean under nominal and a positive mean under the alternative probability measure. This will require
\begin{equation}\label{eq:d_interval}
\mathbb{E}_\infty[(\hat{u}_t^\intercal x_t)^2] < d < \mathbb{E}_0[(\hat{u}_t^\intercal x_t)^2].
\end{equation} 
The \textit{proposed} CUSUM-like stopping time is then defined as
\begin{equation}
\cT = \inf\{t>0: \cS_t \geq b\}.
\label{cusum_procedure}
\end{equation}

To be able to apply \eqref{sscusum_update} we need to specify $d$ and of course the estimate $\hat{u}_t$. Regarding the latter we propose a sliding window of size $w$ and form the sample covariance matrix
\[
\Sigma_t = \textstyle\sum_{i = t+1}^{t+w} x_i x_i^\intercal,
\]
using the observations $\{x_{t+1},\ldots,x_{t+w}\}$ that lie in \textit{the future} of $t$. Then $\hat{u}_t$ is simply the unit-norm eigenvector corresponding to the largest eigenvalue of $\Sigma_t$. The usage of observations from the future might seem somewhat awkward but it is always possible by properly delaying the data. The main advantage of this idea is that it provides estimates $\hat{u}_t$ that are \textit{independent} from $x_t$. Of course employing observations from times after $t$ affects the actual performance of our scheme. In particular, if with \eqref{cusum_procedure} we stop at time $\cT=t$ this implies that we used data from times up to $t+w$ and, consequently, $t+w$ is the true time we stop and not $t$.

The independence between $\hat{u}_t$ and $x_t$ allows for the simple computation of the two expectations in \eqref{eq:d_interval}. However, for this computation to be possible, especially under the alternative regime, it is necessary to be able to describe the statistical behavior of our estimate $\hat{u}_t$. We will assume that the window size $w$ is sufficiently large so that Central Limit Theorem type approximations are possible for $\hat{u}_t$ and we will consider that $\hat{u}_t$ is actually Gaussian with mean $u$ (the correct vector) and (error) covariance matrix that can be specified, analytically, of being size $1/w$ \cite{anderson1963asymptotic, paul2007asymptotics}. Explicit formulas will be given in the Appendix.

\vspace{-0.2cm}
\begin{lemma}\label{lem:d_bound} Adopting the Gaussian approximation for $\hat{u}_t$ we have the following two mean values under the pre- and post-change regime:
\begin{equation*}
\begin{aligned}
\mathbb{E}_\infty[(\hat{u}_t^\intercal x_t)^2] &=  \sigma^2,\\
\mathbb{E}_0[(\hat{u}_t^\intercal x_t)^2] &  = \sigma^2(1+\rho)\left[ 1 - \frac{k-1}{w\rho} \right].
\end{aligned}
\end{equation*}
\end{lemma}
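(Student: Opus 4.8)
The plan is to exploit the independence between $\hat{u}_t$ and $x_t$ that the sliding-window construction guarantees: since $\hat{u}_t$ is built solely from the future samples $x_{t+1},\dots,x_{t+w}$, it is independent of $x_t$, so both expectations can be evaluated by first conditioning on $\hat{u}_t$ and then averaging over its distribution. Conditioned on $\hat{u}_t$, the scalar $\hat{u}_t^\intercal x_t$ is a linear functional of the Gaussian vector $x_t$, hence its conditional second moment is the quadratic form $\hat{u}_t^\intercal \Sigma \hat{u}_t$, where $\Sigma$ is the covariance of $x_t$ in the regime considered. This reduces everything to computing a quadratic form in $\hat{u}_t$ and then taking expectation over the estimator.

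Under the nominal measure $\Sigma = \sigma^2 I_k$, so the conditional mean is $\sigma^2\|\hat{u}_t\|^2 = \sigma^2$, using only that $\hat{u}_t$ is a unit vector. Because this is a deterministic constant, averaging over $\hat{u}_t$ leaves it unchanged, and the first identity follows without invoking the Gaussian approximation at all. Under the alternative $\Sigma = \sigma^2 I_k + \theta u u^\intercal$, the conditional mean becomes $\sigma^2 + \theta(\hat{u}_t^\intercal u)^2$, so the entire problem collapses to evaluating $\mathbb{E}[(\hat{u}_t^\intercal u)^2]$. Here I would write the unit-norm eigenvector as $\hat{u}_t = \sqrt{1-\|v\|^2}\,u + v$ with $v \perp u$, which makes $(\hat{u}_t^\intercal u)^2 = 1 - \|v\|^2$ an \emph{exact} identity; consequently $\mathbb{E}[(\hat{u}_t^\intercal u)^2] = 1 - \mathbb{E}[\|v\|^2]$, and $\mathbb{E}[\|v\|^2]$ is precisely the trace of the covariance of the orthogonal perturbation $v$.

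The remaining and genuinely substantive step is to supply that covariance from the asymptotic theory of sample eigenvectors for the spiked model (the results of Anderson and of Paul cited in the text). For the population spectrum $\lambda_1 = \sigma^2(1+\rho)$ with the remaining $k-1$ eigenvalues all equal to $\sigma^2$, the leading sample eigenvector has error covariance $\tfrac1w V$ with $V = \lambda_1\sum_{j\geq 2}\tfrac{\lambda_j}{(\lambda_1-\lambda_j)^2}\,u_ju_j^\intercal = \tfrac{1+\rho}{\rho^2}\,(I_k - u u^\intercal)$, obtained by summing the standard perturbation coefficients over the $k-1$ bulk directions and using $\sum_{j\geq 2}u_ju_j^\intercal = I_k - uu^\intercal$. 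Taking the trace gives $\mathbb{E}[\|v\|^2] = \tfrac{1}{w}\mathrm{tr}(V) = (1+\rho)(k-1)/(w\rho^2)$; substituting back into $\sigma^2 + \theta\mathbb{E}[(\hat{u}_t^\intercal u)^2]$ and using $\theta = \sigma^2\rho$ collapses the expression to $\sigma^2(1+\rho)[1 - (k-1)/(w\rho)]$, as claimed.

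I expect the main obstacle to be exactly this eigenvector-covariance computation rather than the conditioning argument: getting the perturbation coefficients $\lambda_1\lambda_j/(\lambda_1-\lambda_j)^2$ and the degeneracy count of the bulk right, and — more conceptually — justifying that the relevant $O(1/w)$ correction to $(\hat{u}_t^\intercal u)^2$ enters entirely through $\mathbb{E}[\|v\|^2]$. A naive first-order expansion would treat $\hat{u}_t^\intercal u \approx 1$ and miss the correction altogether; the normalization identity $(\hat{u}_t^\intercal u)^2 = 1-\|v\|^2$ is what correctly routes the second moment of the Gaussian perturbation into the final answer, and the derivation of $V$ is the content that the paper defers to the Appendix.
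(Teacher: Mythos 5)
Your proposal is correct and follows essentially the same route as the paper's Appendix proof: independence of $\hat{u}_t$ and $x_t$ reduces everything to $\mathbb{E}[\hat{u}_t^\intercal\Sigma\hat{u}_t]$, the alternative case collapses to $\mathbb{E}[(\hat{u}_t^\intercal u)^2]$, and the $O(1/w)$ correction is the trace $(k-1)(1+\rho)/(w\rho^2)$ of the eigenvector error covariance $\tfrac{1+\rho}{w\rho^2}(I_k-uu^\intercal)$. The only (immaterial) differences are that you place the approximation in the identification of the covariance of the orthogonal component of the \emph{normalized} eigenvector via the exact identity $(\hat{u}_t^\intercal u)^2=1-\|v\|^2$, whereas the paper perturbs the un-normalized eigenvector and Taylor-expands $1/(1+\|v_t\|^2)\approx 1-\|v_t\|^2$, and that you re-derive the covariance from the standard perturbation coefficients rather than citing it.
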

\vspace{-0.2in}
\begin{proof} The proof is given in the Appendix.
\end{proof}
\vspace{-0.1in}
Lemma \ref{lem:d_bound} also suggests that the window size $w$ and the drift $d$ must satisfy
\begin{equation} \label{d_interval}
 \sigma^2 < d < \sigma^2(1+\rho)\left( 1 - \frac{k-1}{w\rho} \right).
\end{equation}
Necessary condition for this to be true is that $w > (k-1)(1+\rho)/\rho^2$. Actually this constraint is required for the Gaussian approximation to make sense. But in order for the approximation to be efficient we, in fact, need $w$ to be significantly larger than the lower bound. We can see that when the SNR is high ($\rho\gg1$) then with relatively small window size we can obtain efficient estimates. When on the other hand SNR is low ($\rho\ll1$) then far larger window sizes are necessary to guarantee validity of the Gaussian approximation.

\vspace{-0.1in}
\section{Asymptotic analysis}\label{sec:opt}
\vspace{-0.15in}
In this section we will provide performance estimates for the optimum CUSUM test (that has all the information regarding the data) and the Subspace-CUSUM test proposed in the previous section. This will allow for the optimum design of the two parameters $w,d$ and for demonstrating that the resulting detector is asymptotically optimum.

In sequential change detection there are two quantities that play vital role in the performance of a detector: a)~the average run length (ARL) and b)~the expected detection delay (EDD). ARL measures the average period between false alarms while EDD the (worst-case) average detection delay. It is known that CUSUM minimizes the latter while keeps the former above a prescribed level. Let us first compute these two quantities for the case of CUSUM given in \eqref{exacTUSUM}.
\vspace{-0.2in}
\subsection{Asymptotic performance} \label{sec:metric}
\vspace{-0.1in}
From \cite[Pages 396--397]{tartakovsky2014sequential} we have that the test depicted in \eqref{exacTUSUM} has the following performance
\begin{equation}\label{arl&edd}
\mathbb{E}_\infty[T_{\text{C}}] = \frac{e^b}{\mathsf{K}}\big(1+o(1)\big), ~~~\mathbb{E}_0[T_{\text{C}}] = \frac{b}{\mathsf{I}_0}\big(1+o(1)\big),
\end{equation}
where $b$ is the constant threshold; $\mathsf{K}$ is of the order of a constant with its exact value being unimportant for the asymptotic analysis; finally $\mathsf{I}_0$ is the Kullback-Leibler information number $\mathsf{I}_0 = \mathbb{E}_0\{\log \left[ f_0(x)/f_\infty(x)\right]\}$. We recall that the worst-case average detection delay in CUSUM is equal to $\mathbb{E}_0[T_{\text{C}}]$. This is the reason we consider the computation of this quantity. If now, we impose the constraint that the ARL is equal to $\gamma>1$ and for the asymptotic analysis that $\gamma\to\infty$, then we can compute the threshold $b$ that can achieve this false alarm performance namely $b=(\log\gamma)\big(1+o(1)\big)$. Substituting this value of the threshold in EDD we obtain
\begin{equation}
\mathbb{E}_0[T_{\text{C}}]=\frac{\log\gamma}{\mathsf{I}_0}\big(1+o(1)\big).
\label{eq:mbofla}
\end{equation}
Applying this formula in our problem we end up with the following optimum performance
\begin{equation}
\mathbb{E}_0[T_{\text{C}}]=\frac{2\log\gamma}{\rho-\log(1+\rho)}\big(1+o(1)\big).
\label{eq:mbifla}
\end{equation}

For the performance computation of Subspace-CUSUM, since the increment $(\hat{u}_t^\intercal x)^2-d$ in \eqref{sscusum_update} is not a log-likelihood, we cannot use \eqref{eq:mbofla} directly.
To compute the ARL of $\cT$ we first find $\delta_\infty>0$ from the solution of the equation
\begin{equation} \label{delta_constratint}
 \mathbb{E}_\infty[e^{\delta_\infty[(\hat{u}_t^\intercal x_t)^2 - d]}] = 1
 \end{equation}
and then we note that $\delta_\infty[(\hat{u}_t^\intercal x)^2-d]$ is the log-likelihood ratio between the two pdfs
$\tilde{f}_0=\exp\{\delta_\infty[(\hat{u}_t^\intercal x)^2-d]\} f_\infty$ and $f_\infty$. This allows us to compute the threshold $b$ asymptotically as $b=(\log\gamma)\big(1+o(1)\big)/\delta_\infty$ after assuming that $w=o(\log\gamma)$. Similarly we can find a $\delta_0>0$ and define $\tilde{f}_\infty=\exp\{-\delta_0[(\hat{u}_t^\intercal x_t)^2 - d]\}f_0$ so that $\delta_0[(\hat{u}_t^\intercal x_t)^2 - d]$ is the log-likelihood ratio between $f_0$ and $\tilde{f}_\infty$ leading to $\mathbb{E}_0[\cT]=b\big(1+o(1)\big)/(\mathbb{E}_0[(\hat{u}_t^\intercal x_t)^2]-d)$ with the dependence on $\delta_0$ being in the $o(1)$ term. Substituting $b$ we obtain
\begin{equation}
\mathbb{E}_0[\cT]=\frac{\log\gamma}{\delta_\infty\big(\mathbb{E}_0[(\hat{u}_t^\intercal x_t)^2]-d\big)}\big(1+o(1)\big)+w,
\end{equation}
where the last term $w$ is added because we use data from the future of $t$ as we explained before.
Parameter $\delta_\infty$, defined in \eqref{delta_constratint}, is directly related to $d$. We show in the Appendix that $d$ can be expressed in terms of $\delta_\infty$ as follows
\begin{equation} 
d = -\frac1{2\delta_\infty}\log(1-2\sigma^2\delta_\infty). 
\label{d_delta}
\end{equation}
After using Lemma\,\ref{lem:d_bound} and \eqref{d_delta} we obtain the following expression for the EDD:
\begin{equation}\label{edd_star}
\mathbb{E}_0[ \cT ]=\textstyle
\frac{ \log\gamma(1+o(1))}{\sigma^2\delta_\infty(1+\rho)\left( 1 - \frac{k-1}{w\rho} \right)  +\frac12\log(1-2\sigma^2\delta_\infty) }+w.
\end{equation}
\vspace{-0.3in}
\subsection{Parameter optimization and asymptotic optimality}
\vspace{-0.1in}
Note that in the previous equation we have two parameters $\delta_\infty$ and $w$ and the goal is to select them so as to minimize the EDD. Therefore if we first fix the window size $w$ we can minimize over $\delta_\infty$ (which is equivalent to minimizing with respect to the drift $d$). We observe that the denominator is a concave function of $\delta_\infty$ therefore it exhibits a single maximum. The optimum $\delta_\infty$ can be computed by taking the derivative and equating to 0 which leads to a particular ${\delta}_\infty$. Substituting this optimal value we obtain the following minimum EDD:
\begin{multline}\label{edd2}
\mathbb{E}_0[\cT]=
\textstyle
\frac{2 \log\gamma(1+o(1))}{(1+\rho)\left( 1 - \frac{k-1}{w\rho} \right) -1 - \log \left[ (1+\rho)\left( 1 - \frac{k-1}{w\rho} \right)\right]}+w.
\end{multline} 
Equ.~\eqref{edd2} involves only the target ARL level $\gamma$ and the window size $w$. If we keep $w$ constant it is easy to verify that the ratio of the EDD of the proposed scheme over the EDD of the optimum CUSUM tends, as $\gamma\to\infty$, to a quantity which is strictly greater than 1. In order to make this ratio tend to 1 and therefore establish asymptotic optimality we need to select the window size $w$ as a function of $\gamma$. Actually we can perform this selection optimally by minimizing \eqref{edd2} with respect to $w$ for given $\gamma$. The following proposition identifies the optimum window size.
\vspace{-0.05in}
\begin{Proposition}\label{optimal_d_w}
For each ARL level $\gamma$, the optimal window size that minimizes the corresponding EDD is given by
$$
w^* = \textstyle
\sqrt{\log\gamma}\cdot\frac{ \sqrt{2(k-1)}}{\rho - \log(1+\rho)}\big(1+o(1)\big),
$$
resulting in an optimal drift
$$
d^* =\textstyle 
\frac{ \sigma^2 (1+\rho)\left( 1 - \frac{k-1}{w^*\rho} \right) }{(1+\rho)\left( 1 - \frac{k-1}{w^*\rho} \right)-1}  \log \left[ (1+\rho)\left( 1 - \frac{k-1}{w^*\rho} \right)\right].
$$
\end{Proposition}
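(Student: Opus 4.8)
The plan is to regard the right-hand side of \eqref{edd2} as a function of the single variable $w$ (with $\gamma$ fixed and large) and to locate its minimizer asymptotically as $\gamma\to\infty$. The structural fact that drives everything is that the minimizer diverges, $w^*\to\infty$; I would posit this, carry out a large-$w$ expansion, and verify self-consistency at the end. Writing $J := \rho-\log(1+\rho)$ and letting $g(w)$ denote the denominator in \eqref{edd2},
\begin{equation*}
g(w) = (1+\rho)\Big(1-\tfrac{k-1}{w\rho}\Big) - 1 - \log\Big[(1+\rho)\Big(1-\tfrac{k-1}{w\rho}\Big)\Big],
\end{equation*}
the quantity to minimize is $\mathbb{E}_0[\cT] = 2\log\gamma\,(1+o(1))/g(w) + w$.

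First I would expand $g(w)$ for large $w$. Putting $\epsilon = (k-1)/(w\rho)$ and using $\log(1-\epsilon) = -\epsilon - \epsilon^2/2 - \cdots$, the linear terms combine as $-(1+\rho)\epsilon+\epsilon = -\rho\epsilon$, while $\rho\epsilon = (k-1)/w$, so that $g(w) = J - (k-1)/w + O(1/w^2)$; the leading constant $J$ is exactly the exponent governing the optimum CUSUM in \eqref{eq:mbifla}. Substituting and expanding $1/g(w) = J^{-1}\big(1 + (k-1)/(Jw) + O(1/w^2)\big)$ yields
\begin{equation*}
\mathbb{E}_0[\cT] = \frac{2\log\gamma}{J} + \frac{2(k-1)\log\gamma}{J^2\,w} + w + O\!\Big(\frac{\log\gamma}{w^2}\Big),
\end{equation*}
the first term being the (w-independent) optimum CUSUM EDD. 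Minimizing the $w$-dependent part $2(k-1)\log\gamma/(J^2 w) + w$ is the classical convex $C/w+w$ trade-off; setting the derivative to zero gives $w^{*2} = 2(k-1)\log\gamma/J^2$, i.e.
\begin{equation*}
w^* = \sqrt{\log\gamma}\cdot\frac{\sqrt{2(k-1)}}{\rho-\log(1+\rho)}\big(1+o(1)\big),
\end{equation*}
which is the claimed value. I would then check self-consistency: since $w^*\sim\sqrt{\log\gamma}\to\infty$, the expansion parameter $\epsilon=(k-1)/(w^*\rho)\to0$ so the large-$w$ regime is valid, and the remainder $O(\log\gamma/w^{*2})=O(1)$ is negligible against the two balanced terms, each of order $\sqrt{\log\gamma}$.

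For the optimal drift I would return to the first-order condition already used to pass from \eqref{edd_star} to \eqref{edd2}: maximizing the concave denominator $\delta_\infty m_0 + \tfrac12\log(1-2\sigma^2\delta_\infty)$ of \eqref{edd_star} over $\delta_\infty$, with $m_0 = \mathbb{E}_0[(\hat{u}_t^\intercal x_t)^2] = \sigma^2(1+\rho)(1-\tfrac{k-1}{w\rho})$ from Lemma \ref{lem:d_bound}, gives $1-2\sigma^2\delta_\infty = \sigma^2/m_0$. Inserting this into \eqref{d_delta} and simplifying expresses the drift as a function of $w$, $d = \tfrac{\sigma^2 R}{R-1}\log R$ with $R=(1+\rho)(1-\tfrac{k-1}{w\rho})$; evaluating at $w=w^*$ reproduces exactly the stated $d^*$.

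The main obstacle is not any single computation but the rigorous control of these approximations: I must ensure that minimizing the truncated expansion, rather than the exact EDD, shifts the minimizer only within the $(1+o(1))$ factor. Concretely one must show that the neglected $O(1/w^2)$ term in $g(w)$ together with the $(1+o(1))$ multiplying $\log\gamma$ in \eqref{edd2} perturb the stationarity equation by relatively lower-order amounts, so the leading $\sqrt{\log\gamma}$ scaling and its constant are unaffected; a clean route is to bracket the exact minimizer between those of two slightly perturbed convex surrogates and let $\gamma\to\infty$.
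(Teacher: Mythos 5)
Your proposal is correct and takes essentially the same route as the paper: first maximize the concave denominator of \eqref{edd_star} over $\delta_\infty$ for fixed $w$ (yielding \eqref{edd2} and, via \eqref{d_delta}, the stated $d^*$), then minimize \eqref{edd2} over $w$. The paper compresses the second step into ``taking the derivative and equating to 0,'' whereas you make explicit the expansion of the denominator of \eqref{edd2} as $\rho-\log(1+\rho)-(k-1)/w+O(1/w^2)$, which reduces the problem to the elementary $C/w+w$ trade-off and justifies the asymptotic form of $w^*$ --- a worthwhile detail, since the exact stationarity equation is transcendental and only its asymptotic solution is claimed.
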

\vspace{-0.1in}
Using these optimal parameter values it is straightforward to establish that the corresponding Subspace-CUSUM is first-order asymptotically optimum. This is summarized in our next theorem.
\vspace{-0.05in}
\begin{theorem}\label{first_order_opt}
As the ARL level $\gamma\to\infty$, the corresponding EDD of the Subspace-CUSUM procedure $\cT$ with the two parameters $d$ and $w$ optimized as above satisfies
\begin{equation} \label{edd_minimal}
\lim_{\gamma\to\infty}\frac{\mathbb{E}_0[\cT]}{\mathbb{E}_0[T_{\rm C}]}=1.
\end{equation}
\end{theorem}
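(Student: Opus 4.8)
The plan is to insert the optimal window size $w^*$ furnished by Proposition~\ref{optimal_d_w} into the minimized EDD \eqref{edd2} and to compare the outcome, term by term, with the optimum-CUSUM benchmark \eqref{eq:mbifla}. Write $c = \rho - \log(1+\rho)$ for the constant common to both tests and abbreviate the denominator of the leading fraction in \eqref{edd2} by
\[
g(w) = (1+\rho)\Big(1 - \tfrac{k-1}{w\rho}\Big) - 1 - \log\!\Big[(1+\rho)\Big(1 - \tfrac{k-1}{w\rho}\Big)\Big].
\]
The first step I would carry out is a large-$w$ Taylor expansion of $g(w)$. Putting $\epsilon = (k-1)/(w\rho)$ and using $\log(1-\epsilon) = -\epsilon - \epsilon^2/2 - \cdots$, the zeroth-order part collapses to $(1+\rho) - 1 - \log(1+\rho) = c$, the linear-in-$\epsilon$ contributions combine into $-(k-1)/w$, and everything else is $O(1/w^2)$. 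Thus $g(w) = c - (k-1)/w + O(1/w^2)$, so $g(w)\to c$ as $w\to\infty$; already this identifies the leading denominator of the Subspace-CUSUM EDD with that of the optimum CUSUM.

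Next I would expand the leading fraction around this limit. Since $1/g(w) = c^{-1}\big(1 + (k-1)/(cw) + O(1/w^2)\big)$, equation \eqref{edd2} becomes
\[
\mathbb{E}_0[\cT] = \frac{2\log\gamma}{c} + \frac{2(k-1)\log\gamma}{c^2 w} + w + O\!\Big(\frac{\log\gamma}{w^2}\Big).
\]
The window-dependent part $2(k-1)c^{-2}(\log\gamma)/w + w$ is precisely the quantity that Proposition~\ref{optimal_d_w} minimizes; at $w = w^* = c^{-1}\sqrt{2(k-1)\log\gamma}\,(1+o(1))$ the two summands coincide (this is the balanced point of an arithmetic--geometric minimization), each equalling $w^*$, so their sum is $2w^*$. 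Hence
\[
\mathbb{E}_0[\cT] = \frac{2\log\gamma}{c}(1+o(1)) + \frac{2\sqrt{2(k-1)\log\gamma}}{c}(1+o(1)).
\]

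Dividing by $\mathbb{E}_0[T_{\rm C}] = 2c^{-1}\log\gamma\,(1+o(1))$ from \eqref{eq:mbifla}, the first summand yields the leading $1$ while the second contributes $\sqrt{2(k-1)/\log\gamma}\,(1+o(1)) = O(1/\sqrt{\log\gamma})$, which vanishes, establishing \eqref{edd_minimal}. The structural reason behind the result is that the price of not knowing $u$ and of estimating it through the sliding window enters only at order $\sqrt{\log\gamma}$, i.e.\ it is $o(\log\gamma)$ and therefore negligible against the common $\log\gamma$ scaling. The step I expect to be the main obstacle is the bookkeeping of the several $(1+o(1))$ factors: the $o(1)$ in \eqref{edd2} is taken in $\gamma$, whereas the expansion of $g$ is in $w$, and the two must be reconciled along the trajectory $w = w^*(\gamma)\sim\sqrt{\log\gamma}$. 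One must check that $w^*$ grows (legitimizing the large-$w$ expansion of $g$) yet obeys $w^* = o(\log\gamma)$ (the hypothesis under which the threshold approximation $b = (\log\gamma)(1+o(1))/\delta_\infty$ and hence \eqref{edd2} were derived); both hold because $\sqrt{\log\gamma}$ diverges but is of strictly smaller order than $\log\gamma$. Verifying that the $O(\log\gamma/w^2)$ remainder stays within this error budget along $w=w^*$ is then routine.
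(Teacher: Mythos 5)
Your proof is correct and follows essentially the same route as the paper: substitute the optimized $w^*$ and $d^*$ into \eqref{edd2}, expand the denominator for large $w$, and compare with \eqref{eq:mbifla}; your bookkeeping of the $o(1)$ factors and of the requirement $w^*=o(\log\gamma)$ is in fact more explicit than the paper's one-line argument. The only discrepancy is in the (immaterial) second-order term: your $\sqrt{2(k-1)/\log\gamma}$, obtained from the two equal contributions $w^*$ at the balanced optimum, is twice the paper's stated $\sqrt{(k-1)/(2\log\gamma)}$, but since both are $O(1/\sqrt{\log\gamma})$ the limit \eqref{edd_minimal} is unaffected, and your accounting appears to be the consistent one.
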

\vspace{-0.2in}
\begin{proof}
As we pointed out, the proof is straightforward. Indeed if we substitute the optimum $d$ and $w$ and then take the ratio with respect to the optimum CUSUM performance depicted in \eqref{eq:mbifla} we obtain
\vspace{-0.1in}
$$
\vspace{-0.1in}
\frac{\mathbb{E}_0[\cT]}{\mathbb{E}_0[T_{\text{C}}]}=1 + \sqrt{\frac{k-1}{2\log\gamma}} + o(1)\to 1,
$$
which proves the desired limit. Even though the ratio tends to 1, we note that $\mathbb{E}_0[\cT]-\mathbb{E}_0[T_{\text{C}}]=\Theta(\sqrt{\log\gamma})\to\infty$. This is corroborated by our simulations (see Fig.\,\ref{fig:eddcompare}, red curve).
\end{proof}

\vspace{-0.25in}
\section{Numerical examples}\label{sec:num}
\vspace{-0.15in}
We present simulations to illustrate the satisfactory performance of Subspace-CUSUM. For comparison, we consider two other detection procedures: one uses the largest eigenvalue of the sample covariance matrix $\Sigma_{t}$ as the test statistic while the other is the exact CUSUM assuming all parameters are known (ideal but unrealistic case).

The threshold for each detection procedure is determined through Monte-Carlo simulation so they all have the same ARL. Fig.\,\ref{fig:eddcompare} depicts the EDD versus ARL with the latter under a logarithmic scale. Parameters are selected as follows: $k=5$, $\theta=1$, $\sigma^2 = 1$ and window length $w = 20$. Exact CUSUM (black) is compared against Subspace-CUSUM (green) and largest eigenvalue scheme (blue). We see that Subspace-CUSUM has much smaller EDD than the largest eigenvalue procedure while Subspace-CUSUM with optimized window size $w$ (red) is uniformly more efficient.
\begin{figure}[b!]
\vspace{-0.5cm}
\centerline{
\includegraphics[width = 0.34\textwidth]{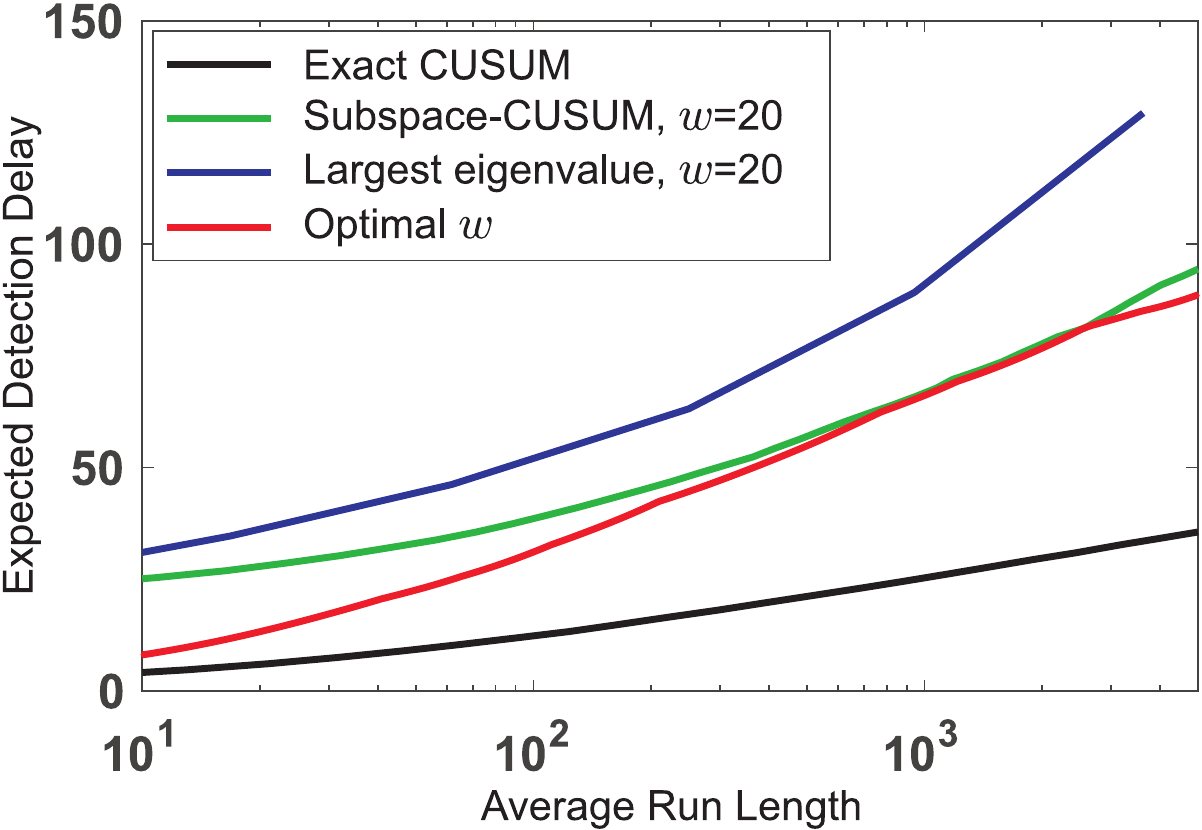} 
}
\vspace{-.1in}
\caption{Comparison of the largest eigenvalue procedure and CUSUM procedures.}
\label{fig:eddcompare}
\vspace{0.4cm}
\centerline{\includegraphics[width = 0.24\textwidth]{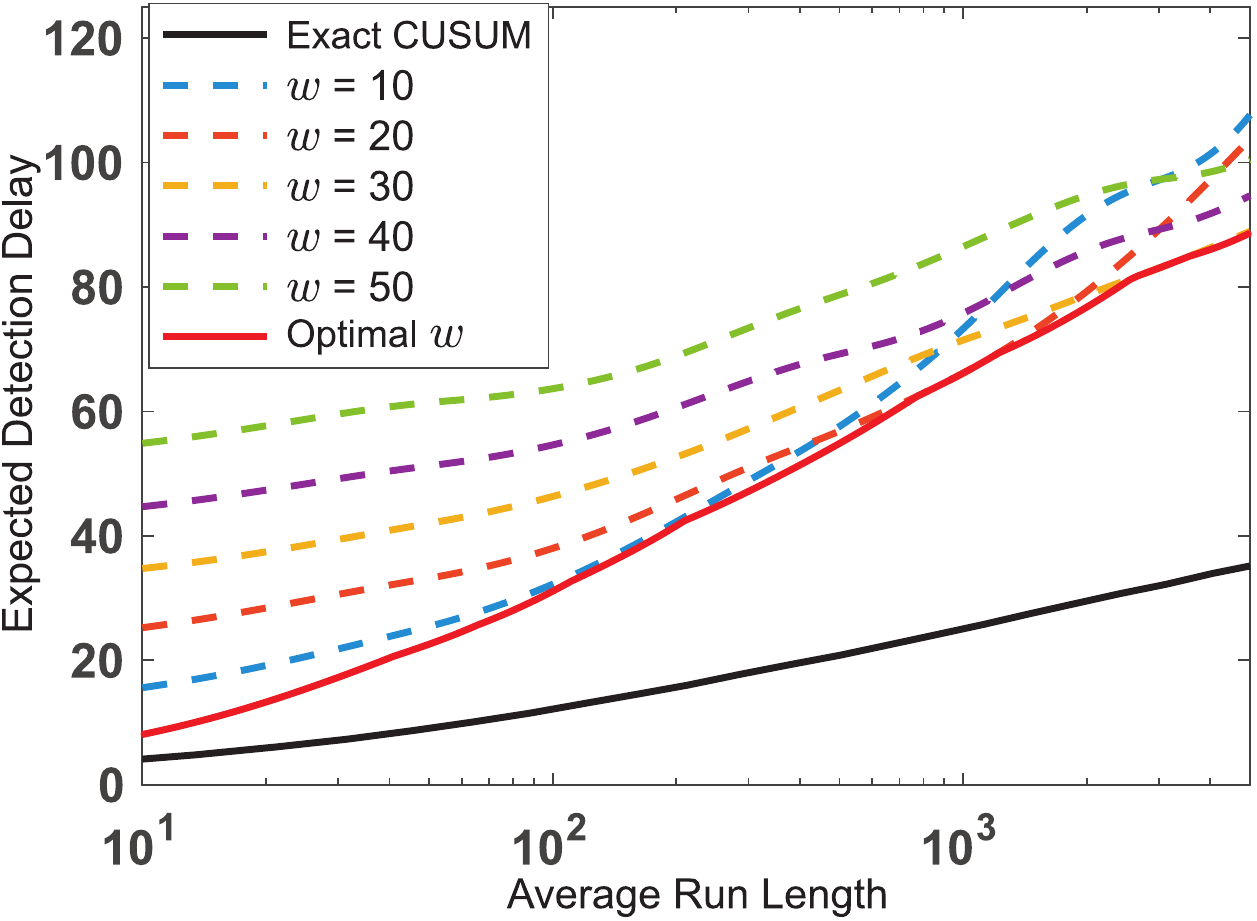} 
\hfill\includegraphics[width = 0.24\textwidth]{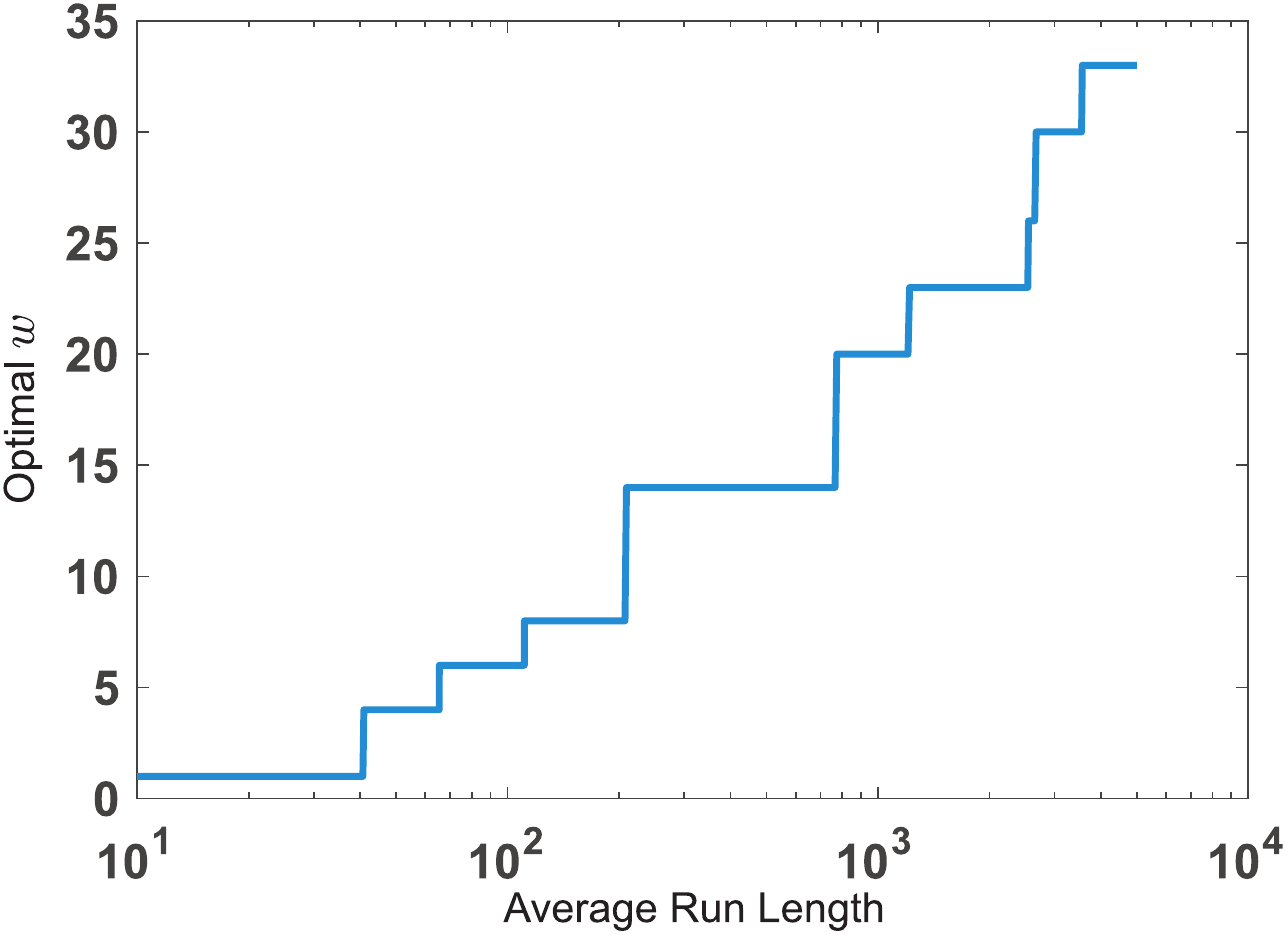}}
\vspace{-.1in}
\caption{Left: Minimal EDD (red) among window sizes $w$ from 1 to 50; Right: Corresponding optimal window size $w$.}
\label{fig:optimalw}
\vspace{-.1in}
\end{figure}
We also consider EDD versus ARL for different $w$ and with numerically optimized $w$ so as to minimize the detection delay for each ARL level. The results appear in Fig.\,\ref{fig:optimalw}, which demonstrate that indeed the optimal $w$ increases with ARL.


\clearpage

\bibliographystyle{IEEEbib}
\bibliography{CAMSAP2017}
\balance
\clearpage
\appendix

\section{Appendix}

\begin{proof}[Proof of Lemma \ref{lem:d_bound}]~
Using the independence between $\hat{u}_t$ and $x_t$ we can write
\begin{equation}
\mathbb{E}[(\hat{u}_t^\intercal x_t)^2]=
\mathbb{E}\big[\hat{u}_t^\intercal \mathbb{E}[x_tx_t^\intercal]\hat{u}_t\big].
\label{eq:A1}
\end{equation}
Consequently, under the nominal regime
$$
\mathbb{E}_\infty[(\hat{u}_t^\intercal x_t)^2]=
\mathbb{E}_\infty[\hat{u}_t^\intercal \sigma^2 I_k\hat{u}_t\big]=\sigma^2,
$$
with the last equality being true because $\hat{u}_t$ is of unit norm.

Under the alternative regime we are going to use Central Limit Theorem arguments \cite{anderson1963asymptotic, paul2007asymptotics} that describe the statistical behavior of the estimator. We have that
$$ 
\sqrt{w}(\omega_t - u) \rightarrow \mathcal{N}\left(0, \frac{1+\rho}{\rho^2}(I_k-uu^\intercal)\right)
$$
where the limit is in distribution as $w\to\infty$ and $\omega_t$ denotes the \textit{un-normalized} eigenvector. For large $w$ we can write $\omega_t=u+v_t$ where
$$
v_t\sim \mathcal{N}\left(0, \frac{1+\rho}{w\rho^2}(I_k -uu^\intercal)\right).
$$
Our estimator $\hat{u}_t$ is related to $\omega_t$ through the normalization process $\hat{u}_t=\omega_t/\|\omega_t\|$, and if we use this in \eqref{eq:A1} after recalling that under the alternative $\mathbb{E}_0[x_tx_t^\intercal]=\sigma^2(I_k+\rho uu^\intercal)$ and using repeatedly the fact that $u$ and $v_t$ are orthogonal, we have
\begin{multline*}
\mathbb{E}_0[(\hat{u}_t^\intercal x_t)^2]=
\sigma^2\mathbb{E}_0\big[\hat{u}_t^\intercal (I_k+\rho u u^\intercal)\hat{u}_t\big]\\
=\sigma^2(1+\rho\mathbb{E}_0[(\hat{u}_t^\intercal u)^2])
=\sigma^2\left(1+\rho\mathbb{E}_0\left[\frac{1}{1+\|v_t\|^2}\right]\right)\\
\approx\sigma^2\left(1+\rho\mathbb{E}_0\left[1-\|v_t\|^2\right]\right)
=\sigma^2(1+\rho)\left(1-\frac{k-1}{w\rho}\right).
\end{multline*}
For the approximate equality we used the fact that to a first order approximation we can write $1/(1+\|v_t\|^2)\approx 1-\|v_t\|^2$ because $\|v_t\|^2$ is of the order of $1/w$ while the approximation error is of higher order. This completes the proof.
\end{proof}
 
\begin{proof}[Proof of Proposition \ref{optimal_d_w}]

Let us first evaluate the expectation in \eqref{delta_constratint} to demonstrate the relationship between $\delta_\infty$ and $d$ depicted in \eqref{d_delta}. Using standard computations involving Gaussian random vectors we can write
\begin{multline*}
\mathbb{E}_\infty[e^{\delta_\infty[(\hat{u}_t^\intercal x_t)^2 - d]}]
= e^{-\delta_\infty d}\mathbb{E}_\infty \left[ \mathbb{E}_\infty [e^{\delta_\infty(\hat{u}_t^\intercal x_t)^2} | \hat{u}_t ] \right]\\
=e^{-\delta_\infty d} \mathbb{E}_\infty \left[ \int e^{\delta_\infty x_t^\intercal( \hat{u}_t \hat{u}_t^\intercal) x_t} \cdot \frac{ e^{-x_t^\intercal x_t /(2\sigma^2)}}{ \sqrt{ (2\pi)^k\sigma^{2k} }} dx_t \right]\\
=\frac{e^{-\delta_\infty d} }{\sqrt{1-2\sigma^2\delta_\infty }}.
\end{multline*}
To compute the integral we used the standard technique of ``completing the square'' in the exponent and with proper normalization generate an alternative Gaussian pdf which integrates to 1. The interesting observation is that the result of the integration does not actually depend on $\hat{u}_t$.

If we use the optimum value for $d$ in terms of $\delta_\infty$ then as we argued in the text we obtain for EDD the expression appearing in \eqref{edd_star}. We can now fix $w$ and optimize EDD with respect to $\delta_\infty$. This is a straightforward process since it amounts in maximizing the denominator. Taking the derivative and equating to 0 yields the optimum $\delta_\infty$
$$
 \delta^*_\infty  = \frac{1}{2\sigma^2} \left( 1- \frac1{ (1+\rho)\left( 1 - \frac{k-1}{w\rho} \right)} \right).
$$
Substituting this value in \eqref{edd_star} produces \eqref{edd2}.

The next step consists in minimizing \eqref{edd2} with respect to $w$. Again taking the derivative and equating to 0 we can show that the optimum window size is the $w^*$ depicted in Proposition \ref{optimal_d_w}.
\end{proof}



\end{document}